\documentclass[12pt,a4paper,reqno]{amsart}
\usepackage[english]{babel}
\usepackage[applemac]{inputenc}
\usepackage[T1]{fontenc}
\usepackage{palatino}
\usepackage{amsmath}
\usepackage{amssymb}
\usepackage{amsthm}
\usepackage{amsfonts}
\usepackage{graphicx}
\usepackage[colorlinks = true, citecolor = black]{hyperref}
\pagestyle{headings}
\author{Tuomas Orponen}
\thanks{The research was partially supported by the Academy of Finland, grant 133264 "Stochastic and harmonic analysis, interactions and applications".}
\title[Slices of self-similar sets]{On the packing measure of slices of self-similar sets}
\address{Department of Mathematics and Statistics, University of Helsinki}
\subjclass[2010]{28A80 (Primary); 28A78, 37C45 (Secondary)}
\email{tuomas.orponen@helsinki.fi}

\newcommand{\R}{\mathbb{R}}
\newcommand{\N}{\mathbb{N}}

\newcommand{\calR}{\mathcal{R}}

\newcommand{\Pd}{\dim_{\mathrm{p}}}

\newcommand{\calP}{\mathcal{P}}

\newcommand{\diam}{\operatorname{diam}}

\newcommand{\dist}{\operatorname{dist}}

\numberwithin{equation}{section}

\theoremstyle{plain}
\newtheorem{thm}[equation]{Theorem}

\newtheorem{lemma}[equation]{Lemma}

\newtheorem{cor}[equation]{Corollary}

\theoremstyle{definition}

\newtheorem{notation}[equation]{Notation}

\theoremstyle{remark}

\addtolength{\hoffset}{-1.15cm}
\addtolength{\textwidth}{2.3cm}
\addtolength{\voffset}{0.45cm}
\addtolength{\textheight}{-0.9cm}

\begin{document}

\begin{abstract} Let $K \subset \R^{2}$ be a rotation and reflection free self-similar set satisfying the strong separation condition, with dimension $\dim K = s > 1$. Intersecting $K$ with translates of a fixed line, one can study the $(s - 1)$-dimensional Hausdorff and packing measures of the generic non-empty line sections. In a recent article, T. Kempton gave a necessary and sufficient condition for the Hausdorff measures of the sections to be positive. In this paper, I consider the packing measures: it turns out that the generic section has infinite $(s - 1)$-dimensional packing measure under relatively mild assumptions.
\end{abstract}

\maketitle

\section{Introduction}

The main motivation for this paper is the article \cite{Ke} by T. Kempton, where the following question is considered. Let $s > 1$, and fix an $s$-dimensional self-similar set $K \subset \R^{d}$, satisfying the open set condition and containing no rotations or reflections. Then, pick a one-dimensional subspace $L \subset \R^{d}$, and slice $K$ with the $(d - 1)$-planes $V_{t} := \pi_{L}^{-1}\{t\}$, where $\pi_{L}$ stands for the orthogonal projection onto $L$. Under what conditions do many of the slices $K_{L,t} := K \cap V_{t}$ have positive $(s - 1)$-dimensional Hausdorff measure? The answer turns out to be closely connected with the behaviour of the projection of the measure $\mathcal{H}^{s}|_{K}$ into the line $L$, denoted by $\pi_{L\sharp}(\mathcal{H}^{s}|_{K})$. Under a mild geometric condition on the set $K$ -- implied by the strong separation condition -- Kempton proves that $\mathcal{H}^{s - 1}(K_{L,t}) > 0$ for $\pi_{L\sharp}(\mathcal{H}^{s}|_{K})$ almost all $t \in L$, if and only if $\pi_{L\sharp}(\mathcal{H}^{s}|_{K}) \ll \mathcal{H}^{1}$ with bounded density.  

By Marstrand's projection theorem, the condition $s > 1$ alone implies that $\pi_{L\sharp}(\mathcal{H}^{s}|_{K}) \ll \mathcal{H}^{1}$ for almost all one-dimensional subspaces $L$. But in most practical instances -- especially when $d = 2$ -- current methods do not shed much light on the question of whether or not $\pi_{L\sharp}(\mathcal{H}^{s}|_{K})$ has bounded density. So, it seems desirable to obtain some information about the slices $K_{L,t}$ under weaker assumptions on $\pi_{L\sharp}(\mathcal{H}^{s}|_{K})$. Since Kempton's result is a characterisation, however, such assumptions simply cannot yield information about Hausdorff measure.

In this paper, I study the \emph{packing measure} of the sets $K_{L,t}$. I restrict attention to the case $d = 2$, and, like Kempton, I only consider rotation and reflection free self-similar sets $K$ (RRFSSS in short). The main result in this setting is the following:
\begin{thm}\label{mainWeak} Let $K \subset \R^{2}$ be a RRFSSS satisfying the strong separation condition, with $\dim K = s > 1$. Let $\pi$ be the orthogonal projection onto some one-dimensional subspace. Assume that the following conditions are met.
\begin{itemize}
\item[(A)] $\pi_{\sharp}(\mathcal{H}^{s}|_{K}) \ll \mathcal{H}^{1}$.
\item[(B)] The self-similar set $\pi(K)$ has no fixed point coincidence.
\end{itemize}
Then $\Pd [K \cap \pi^{-1}\{t\}] = s - 1$ and $\calP^{s - 1}(K \cap \pi^{-1}\{t\}) = \infty$ for almost every $t \in \pi(K)$, where $\Pd$ and $\mathcal{P}^{s - 1}$ stand for packing dimension and $(s - 1)$-dimensional packing measure, respectively. 
\end{thm}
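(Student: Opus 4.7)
The plan is to extract an exact cascade identity for the packing measure of slices from the self-similar structure of $K$ and to integrate it against $\mu := \pi_{\sharp}(\mathcal{H}^{s}|_{K})$, thereby exploiting the sub-unital scaling factor $\sum_{i} r_{i}^{2s - 1} < 1$. Write $\phi(t) := \mathcal{P}^{s - 1}(K \cap \pi^{-1}\{t\})$, and, for each word $I$ of length $n$, let $g_{I}$ be the similarity on the line induced by $f_{I}$, so that $\pi \circ f_{I} = g_{I} \circ \pi$ (the absence of rotations and reflections is essential for this commutation). Strong separation makes the cylinders $f_{I}(K)$ pairwise disjoint in $\R^{2}$, giving the disjoint decomposition
\begin{equation*}
K \cap \pi^{-1}\{t\} \;=\; \bigsqcup_{I \in \Sigma_{n}(t)} f_{I}\bigl(K \cap \pi^{-1}\{g_{I}^{-1}(t)\}\bigr), \qquad \Sigma_{n}(t) := \{I : |I| = n,\ t \in g_{I}(\pi K)\}.
\end{equation*}
Combining this with $\mathcal{P}^{s - 1}(f_{I}E) = r_{I}^{s - 1}\mathcal{P}^{s - 1}(E)$ and countable additivity of packing measure on disjoint Borel sets yields the cascade
\begin{equation*}
\phi(t) \;=\; \sum_{I \in \Sigma_{n}(t)} r_{I}^{s - 1}\,\phi(g_{I}^{-1}(t)), \qquad n \in \N.
\end{equation*}

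Next, the self-similar identity $\mu = \sum_{|I|=n} r_{I}^{s}\,g_{I,\sharp}\mu$ furnishes the change-of-variables needed to integrate the cascade, producing
\begin{equation*}
\int \phi\,d\mu \;=\; \Bigl(\sum_{i} r_{i}^{2s - 1}\Bigr)^{n}\int \phi\,d\mu.
\end{equation*}
Because $s > 1$, $2s - 1 > s$ and so $\sum_{i} r_{i}^{2s - 1} < \sum_{i} r_{i}^{s} = 1$: the only non-negative fixed points of this strict $L^{1}(\mu)$-contraction are $0$ and $+\infty$. To locate which occurs, I will first establish the dimension claim $\Pd[K \cap \pi^{-1}\{t\}] = s - 1$ for $\mu$-a.e. $t$, using standard packing-dimension slicing for the upper bound and the cascade itself (together with (A)) for the lower bound, since the latter embeds many rescaled copies of generic slices into any given slice. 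Then, by a density estimate in the spirit of Kempton's Hausdorff analysis but now aimed at packing measure, I expect to conclude $\phi > 0$ on a set of positive $\mu$-measure, thereby ruling out $\int \phi\,d\mu = 0$ and forcing $\int \phi\,d\mu = +\infty$.

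To upgrade this to $\phi = +\infty$ $\mu$-almost everywhere, I apply a zero-one law. The set $A := \{\phi < \infty\}$ satisfies $g_{i}^{-1}(A \cap g_{i}(\pi K)) \subseteq A$, because each summand in the cascade for $\phi(t)$ must be finite whenever $\phi(t)$ is; lifting $A$ to the symbolic code $\Sigma$ equipped with the ergodic Bernoulli measure of weights $(r_{i}^{s})$ produces a shift-invariant event, so $\mu(A) \in \{0, \mu(\pi K)\}$. Combined with the preceding paragraph, this forces $\phi = +\infty$ for $\mu$-almost every $t$. The principal obstacle, in my view, is the density lower bound needed to discard the zero case: the cascade-plus-integration trick yields only the $0/\infty$ dichotomy, and ruling out the zero alternative forces a direct quantitative estimate on $\mathcal{P}^{s-1}(K_{t})$ using (A) and (B). A secondary technical point is that the projected cylinders $g_{I}(\pi K)$ may overlap even under (B), so the integration step should be carried out via the upstairs decomposition $\mathcal{H}^{s}|_{K} = \sum_{|I|=n} r_{I}^{s}(f_{I})_{\sharp}(\mathcal{H}^{s}|_{K})$ — whose disjointness is guaranteed by the strong separation of $K$ in $\R^{2}$ — rather than working directly with $\mu$ on the line.
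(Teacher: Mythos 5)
Your single-slice cascade is correct: under strong separation $K\cap\pi^{-1}\{t\}$ is the disjoint union of the sets $f_{I}(K\cap\pi^{-1}\{g_{I}^{-1}(t)\})$ over $I\in\Sigma_{n}(t)$, and packing measure scales under similarities, so $\phi(t)=\sum_{I\in\Sigma_{n}(t)}r_{I}^{s-1}\phi(g_{I}^{-1}(t))$. But the integrated identity $\int\phi\,d\mu=(\sum_{i}r_{i}^{2s-1})^{n}\int\phi\,d\mu$ is false, and the failure is not the bookkeeping issue you flag at the end but a structural obstruction. Carrying out the change of variables upstairs, exactly as you propose, yields $\int\phi\,d\mu=\sum_{|I|=n}r_{I}^{s}\int\phi(g_{I}(u))\,d\mu(u)$; to reach your identity you would need $\phi(g_{I}(u))=r_{I}^{s-1}\phi(u)$, whereas the cascade only gives $\phi(g_{I}(u))\geq r_{I}^{s-1}\phi(u)$, the surplus being the packing measure of the part of the slice at $g_{I}(u)$ contained in cylinders $f_{J}(K)$, $J\neq I$, whose projections also cover the point $g_{I}(u)$. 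These surplus terms are unavoidable here: since $s>1$ and $\sum_{i}r_{i}^{s}=1$, one has $\sum_{i}r_{i}>1$, so the level-$n$ projected cylinders have total length $(\sum_{i}r_{i})^{n}\,\mathcal{H}^{1}(\pi(K))\to\infty$ inside the bounded set $\pi(K)$ and must overlap with unbounded multiplicity. The correct outcome of your computation is only the vacuous inequality $\int\phi\,d\mu\geq(\sum_{i}r_{i}^{2s-1})^{n}\int\phi\,d\mu$, so no $0/\infty$ dichotomy follows. Two further gaps remain even if one grants the dichotomy: first, $\int\phi\,d\mu=\infty$ combined with your zero--one law for $A=\{\phi<\infty\}$ does not force $\mu(A)=0$, since an a.e.\ finite function can have infinite integral; second, the step you defer --- producing $\phi>0$ on a set of positive measure --- is not a residual density estimate but essentially the entire content of the theorem, because under (A) alone the density of $\pi_{\sharp}(\mathcal{H}^{s}|_{K})$ may be unbounded, in which case Kempton's theorem gives $\mathcal{H}^{s-1}(K_{t})=0$ a.e.\ and positivity cannot be borrowed from Hausdorff measure.

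For comparison, the paper's argument is entirely quantitative and makes no use of a cascade. Condition (B) (or the weaker (B')) provides a protected vertical corridor near one edge of $K$, which after iteration produces, around $\mu$-a.e.\ point and at arbitrarily small scales, concentric axes-parallel rectangles $R_{1}\subset R_{2}$ of arbitrarily large aspect ratio $C$ with $K\cap R_{2}\subset R_{1}$, $\mu(R_{2})\sim w(R_{2})^{s}$, and $\mathcal{H}^{1}(\pi(K\cap R_{2}))\gtrsim w(R_{2})$. A Vitali selection of such $R_{2}$'s turns each slice $K_{t}$ into a packing by intervals of length $\sim C\,w(R_{2})$, and integrating in $t$ over a suitable positive-measure set gives $\int_{E}P_{\delta}^{s-1}(K_{t})\,dt\gtrsim C^{s-1}\mu(\pi^{-1}(E_{0}))$; condition (A), via the equivalence of $\pi_{\sharp}\mu$ with $\mathcal{H}^{1}|_{\pi(K)}$, guarantees the right-hand side is positive, and letting $C\to\infty$ forces $P^{s-1}(K_{t})=\infty$ a.e. A separate Baire-category lemma then converts the premeasure statement into one about $\mathcal{P}^{s-1}$. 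If you wish to salvage your approach, the honest route is to accept that the quantitative lower bound on packings of $K_{t}$ must be proved directly; once that is done to the strength of ``infinite premeasure,'' the cascade and zero--one law become unnecessary.
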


In fact, (B) can even be replaced by a slightly weaker condition, see Section \ref{mainProofs}.

To conclude the introduction, I mention another motivation for the paper, which has little to do with self-similar sets to begin with. If $K \subset \R^{d}$ is a general (Borel) set with $\mathcal{H}^{s}(K) < \infty$ for some $s > 1$, it is well-known, see \cite[Theorem 7.7]{Ma}, that $\mathcal{H}^{s - 1}(K_{L,t}) < \infty$ for almost all $t \in L$, and for every line $L$. 

For packing dimension, the closest known analogue is the following result by K. Falconer \cite[Lemma 5]{Fa2}: if $\Pd K \leq s$ for some $s > 1$, then almost all of the slices have $\Pd K_{L,t} \leq s - 1$ (in particular, the conclusion that $\Pd [K \cap \pi^{-1}\{t\}] \leq s - 1$ for almost all $t \in \pi(K)$ in Theorem \ref{mainWeak} is a corollary of this result). What Falconer's lemma does not reveal, however, is whether $\mathcal{P}^{s}(K) < \infty$ implies finite $(s - 1)$-dimensional packing measure for almost all slices -- in analogue with the situation for Hausdoff measures. Since the sets $K$ appearing in Theorem \ref{mainWeak} have $\calP^{s}(K) < \infty$, the conclusion is that the answer is definitely negative: curiously, one can find an abundance of counterexamples even in sets as regular as RRFSSS's.

\begin{cor}\label{mainCor} Let $K \subset \R^{2}$ be a RRFSSS satisfying the strong separation condition with $\dim K = s > 1$. Then, for almost all one-dimensional subspaces $L$, one has $\mathcal{P}^{s - 1}(K_{L,t}) = \infty$ for almost all $t \in \pi_{L}(K)$.
\end{cor}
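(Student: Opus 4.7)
The plan is to deduce Corollary \ref{mainCor} directly from Theorem \ref{mainWeak} by verifying that both hypotheses (A) and (B) hold for almost every one-dimensional subspace $L \subset \R^{2}$. Condition (A) is essentially free: since $\mathcal{H}^{s}(K) < \infty$ and $s > 1$, Marstrand's projection theorem applied to the finite Borel measure $\mathcal{H}^{s}|_{K}$ gives $\pi_{L\sharp}(\mathcal{H}^{s}|_{K}) \ll \mathcal{H}^{1}$ for almost every $L$, exactly as was already noted in the introduction.

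For condition (B), denote the IFS of $K$ by $\{f_{i}\}_{i = 1}^{N}$ with $f_{i}(x) = r_{i}x + b_{i}$ (no rotations or reflections, by the RRFSSS assumption) and fixed points $x_{i} = b_{i}/(1 - r_{i})$. Because $\pi_{L}$ is linear, the IFS generating $\pi_{L}(K) \subset L$ consists of the similarities $t \mapsto r_{i}t + \pi_{L}(b_{i})$, whose fixed points are precisely $\pi_{L}(x_{i})$, and a straightforward induction on word length shows that the same compatibility persists for finite compositions: the fixed point of any projected composition is $\pi_{L}(x_{\mathbf{i}})$, where $x_{\mathbf{i}}$ denotes the fixed point of $f_{\mathbf{i}}$. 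A standard consequence of the strong separation condition is that the collection $\{x_{\mathbf{i}}\}$, indexed by all finite words, consists of pairwise distinct points of $\R^{2}$ (a coincidence $x_{\mathbf{i}} = x_{\mathbf{j}}$ would force the cylinder sets $f_{\mathbf{i}}(K)$ and $f_{\mathbf{j}}(K)$ to share a point, which a short prefix/SSC argument rules out). Therefore a fixed point coincidence in $\pi_{L}(K)$ would require $L^{\perp}$ to be parallel to some nonzero vector $x_{\mathbf{i}} - x_{\mathbf{j}}$; as there are only countably many such vectors, the bad set of directions is countable and thus has measure zero.

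Combining these two observations, Theorem \ref{mainWeak} applies for almost every $L$ and yields $\calP^{s - 1}(K_{L,t}) = \infty$ for almost every $t \in \pi_{L}(K)$, which is precisely the statement of the corollary. The only point requiring care --- and the likely source of any friction in writing the proof cleanly --- is confirming that the exact notion of fixed point coincidence used in the hypothesis of Theorem \ref{mainWeak} really does survive linear projection; this is automatic because the projected IFS is obtained by conjugating the original IFS by $\pi_{L}$, so the relevant algebraic structure is preserved. Otherwise the proof is simply a packaging of Theorem \ref{mainWeak}, Marstrand's projection theorem, and an elementary countable exclusion argument.
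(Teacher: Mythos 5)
Your argument is correct and is essentially the paper's own proof: Marstrand's projection theorem gives (A) for almost every $L$, the set of directions for which the projected IFS has a fixed point coincidence is countable (indeed finite, one bad direction per pair of distinct fixed points), and Theorem \ref{mainWeak} then applies. One small caveat: your parenthetical claim that the fixed points $x_{\mathbf{i}}$ are pairwise distinct over \emph{all} finite words is false for comparable words (e.g.\ $x_{\mathbf{i}} = x_{\mathbf{i}\mathbf{i}}$, and the SSC only separates cylinders $f_{\mathbf{i}}(K)$, $f_{\mathbf{j}}(K)$ when neither word is a prefix of the other), but this is harmless here since condition (B) only involves the finitely many generating maps, whose fixed points are distinct by the strong separation condition.
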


\section{Acknowledgements} 

I wish to thank Tom Kempton for suggesting the problem, Katrin F\"assler for fruitful discussions, and the anonymous referees for numerous good comments. In particular, an earlier version of Theorem \ref{mainWeak} assumed that the similitudes generating $K$ are equicontractive, but then one of the reviewers generously supplied an argument to remove the extra hypothesis. 

\section{Notation and initial reductions}

\begin{notation} The Hausdorff and packing dimensions of a set $B \subset \R^{d}$ are denoted by $\dim B$ and $\Pd B$, respectively. The $s$-dimensional Hausdorff and packing measures are denoted by $\mathcal{H}^{s}$ and $\calP^{s}$. The definition of $\calP^{s}$, see \cite[\S 5.10]{Ma}, involves the concept of the \emph{$s$-dimensional packing premeasure}, $P^{s}$, along with its $\delta$-approximates $P^{s}_{\delta}$. The restriction of any measure $\mu$ on $\R^{2}$ to a $\mu$-measurable subset $B$ is denoted by $\mu|_{B}$. If $f \colon \R^{2} \to \R$ is a continuous function, the \emph{image measure} $f_{\sharp}\mu$ is a measure on $\R$ defined by $f_{\sharp}\mu(B) = \mu(f^{-1}(B))$, $B \subset \R$.

Given $A,B > 0$, I write $A \lesssim B$, if there exists an absolute constant $C \geq 1$ such that $A \leq CB$. By $A \gtrsim B$ I mean that $B \lesssim A$. The notation $A \sim B$ is used, if both $A \lesssim B$ and $A \gtrsim B$. If any of the symbols $\lesssim$, $\gtrsim$ or $\sim$ carry a parameter in the subindex, for instance $A \sim_{p} B$, then the implied constant $C$ is allowed to depend on this parameter -- and nothing else.
\end{notation}

\subsection{Self-similar sets} A non-empty compact set $K \subset \R^{d}$ is called \emph{self-similar}, if it satisfies the functional equation
\begin{equation}\label{selfSimilar} K = \bigcup_{j = 1}^{q} \psi_{j}(K), \end{equation}
where the mappings $\psi_{j}$ are contracting similitudes. This means that 
\begin{displaymath} |\psi_{j}(x) - \psi_{j}(y)| = \rho_{j}|x - y|, \qquad x,y \in \R^{d}, \end{displaymath}
where $\rho_{j} \in (0,1)$ is the \emph{contraction ratio} of the similitude $\psi_{j}$. A foundational result of Hutchinson \cite{Hu} states that to every finite family $\{\psi_{1},\ldots,\psi_{q}\}$ of contractive similitudes, there exists one and only one non-empty compact set $K$ satisfying \eqref{selfSimilar}. One often says that $K$ is \emph{generated} by the family $\{\psi_{1},\ldots,\psi_{q}\}$. In this note, I only consider \emph{rotation and reflection free} self-similar sets $K$ (RRFSSS in short). These words mean that $K$ is generated by a family of similitudes $\{\psi_{1},\ldots,\psi_{q}\}$ of the form $\psi_{j}(x) = \rho_{j} x + w_{j}$, where $\rho_{j} \in (0,1)$ and $w_{j} \in \R^{2}$. 

\subsection{Reduction from $\calP^{s - 1}$ to $P^{s - 1}$} The first step in the proof of Theorem \ref{mainWeak} is to reduce matters from the packing measure $\calP^{s - 1}$ to the packing premeasure $P^{s - 1}$, which is a larger quantity and thus easier to estimate from below. This reduction is the content of the next lemma. Before stating the lemma, let me note that in all that follows one may assume that the projection $\pi$ is the \emph{vertical projection} $\pi(x,y) = x$, and that $\pi(K) \subset [0,1]$. If $B \subset \R^{2}$ and $t \in \R$, I write
\begin{displaymath} B_{t} := B \cap \pi_{t}^{-1}\{t\}. \end{displaymath}

\begin{lemma}\label{reductionLemma} Let $K \subset \R^{2}$ be a RRFSSS generated by a family of similitudes $\{\psi_{1},\ldots,\psi_{q}\}$. Assume that $P^{s - 1}(K_{t}) = \infty$ for a.e $t \in \pi(K)$. Then $\calP^{s - 1}(K_{t}) = \infty$ for a.e. $t \in \pi(K)$.
\end{lemma}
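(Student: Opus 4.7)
The plan is to argue by contraposition: suppose the set $F := \{t \in \pi(K) : \calP^{s-1}(K_t) < \infty\}$ has positive Lebesgue measure, and derive a positive-measure set on which $P^{s-1}(K_\tau) < \infty$, contradicting the hypothesis. Because every $\psi_j$ is a translation-dilation (no rotation or reflection), it sends vertical lines to vertical lines, inducing a similitude $\tilde{\psi}_j$ on the $x$-axis with the same ratio $\rho_j$. At level $n$ this yields the decomposition $K_t = \bigcup_{\mathbf{i}} \psi_{\mathbf{i}}(K_{\tilde{\psi}_{\mathbf{i}}^{-1}(t)})$ (disjoint by SSC), where $\mathbf{i}$ ranges over words of length $n$ with $\tilde{\psi}_{\mathbf{i}}^{-1}(t) \in \pi(K)$.

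Fix $t \in F$ and pick a cover $K_t \subset \bigcup_i E_{t,i}$ with $\sum_i P^{s-1}(E_{t,i}) < \infty$. Passing to closures preserves $P^{s-1}$ (a feature of Mattila's centered-packing premeasure: a packing of $\overline{E}$ can be perturbed to a packing of $E$ by sliding centers and infinitesimally shrinking the radii), so $\{\overline{E_{t,i}} \cap K_t\}_i$ is a cover of the compact metric space $K_t$ by relatively closed sets whose total premeasure is still finite. Discarding the case $\card K_t < \infty$ (where $P^{s-1}(K_t) = 0$ trivially), the Baire category theorem produces an index $i_0 = i_0(t)$ such that $\overline{E_{t,i_0}} \cap K_t$ has nonempty interior in $K_t$, whence one finds $p \in K_t$ and $r > 0$ with $K_t \cap B(p,r) \subset \overline{E_{t,i_0}}$.

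Now choose $n$ large enough that every cylinder $\psi_{\mathbf{i}}(K)$ of level $n$ has diameter less than $r$, and select $\mathbf{i}(t)$ of length $n$ with $p \in \psi_{\mathbf{i}(t)}(K)$. Setting $t' := \tilde{\psi}_{\mathbf{i}(t)}^{-1}(t)$, one has $t' \in \pi(K)$ because $\psi_{\mathbf{i}(t)}^{-1}(p) \in K_{t'}$, and $\psi_{\mathbf{i}(t)}(K_{t'}) = \psi_{\mathbf{i}(t)}(K) \cap K_t \subset K_t \cap B(p,r) \subset \overline{E_{t,i_0}}$. Monotonicity of $P^{s-1}$ together with the scaling law $P^{s-1}(\psi_{\mathbf{i}}(A)) = \rho_{\mathbf{i}}^{s-1} P^{s-1}(A)$ then gives $P^{s-1}(K_{t'}) < \infty$. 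Since there are only countably many words, some fixed $\mathbf{i}^*$ is chosen on a subset $F_{\mathbf{i}^*} \subset F$ of positive Lebesgue measure, and then $G := \tilde{\psi}_{\mathbf{i}^*}^{-1}(F_{\mathbf{i}^*}) \subset \pi(K)$ has positive Lebesgue measure and satisfies $P^{s-1}(K_\tau) < \infty$ throughout --- contradiction.

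The main technical nuisance I foresee is the reduction to closed cover sets in the second step, where the centered-packing convention for $P^{s-1}$ is essential: for non-centered packings, closures could strictly increase the premeasure and the Baire extraction would break down. A smaller secondary worry is that the assignment $t \mapsto (i_0(t),p,r,\mathbf{i}(t))$ need not be measurable, but this is irrelevant since the pigeonhole argument requires only the existence of such an assignment with countable range.
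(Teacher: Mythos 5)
Your proof is correct and follows essentially the same route as the paper's: Baire category applied to a countable relatively closed cover of the compact slice $K_t$, a small cylinder set trapped inside the fat cover element, and the self-similar scaling of the premeasure $P^{s-1}$ under the homotheties $\psi_{\mathbf{i}}$. The paper organizes it directly (removing in advance the single null set where some cylinder slice has finite premeasure) rather than by contraposition with a pigeonhole over words, but the substance, including the reduction to closed covers via $P^{s-1}(E)=P^{s-1}(\overline{E})$, is the same.
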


\begin{proof} Assume that $\mathcal{H}^{1}(\pi(K)) > 0$ (otherwise the statement is vacuous). Then, let $K^{0},K^{1},K^{2},\ldots$ be an enumeration of all sets of the form $\psi_{\omega_{1}} \circ \psi_{\omega_{2}} \circ \ldots \circ \psi_{\omega_{m}}(K)$, with $m \geq 0$ and $(\omega_{1},\ldots,\omega_{m}) \in \{1,\ldots,q\}^{m}$. Associate to each $K^{j}$ the set
\begin{displaymath} E^{j} := \{t \in \pi(K^{j}) : P^{s - 1}(K^{j}_{t}) < \infty\} \subset \pi(K^{j}) \end{displaymath} 
By self-similarity, $\mathcal{H}^{1}(E^{j}) = 0$ for all $j \geq 0$. Thus, also the union $E := \bigcup_{j} E^{j} \subset \pi(K)$ has zero length. Pick $t \in \pi(K) \setminus E$. The aim is to show that $\calP^{s - 1}(K_{t}) = \infty$.

To achieve this, express $K_{t}$ as the countable union $K_{t} = \bigcup_{i} S_{i}$ of closed sets $S_{i} \subset \pi^{-1}\{t\}$. The set $K_{t}$ is compact and non-empty (as it has infinite $P^{s - 1}$-measure), so Baire's theorem states that it cannot be expressed as the countable union of closed sets without interior in the relative topology of $K_{t}$. Let $S = S_{i}$ be a set with non-empty $K_{t}$-interior. Since $K_{t} \subset K$, the relative topology of $K_{t}$ is inherited from $K$. A basis for the topology of $K$ is formed by the sets $K^{j}$, $j \geq 0$, so for any interior point $x \in S$ one may find a set $K^{j}$ such that $x \in K^{j} \cap K_{t} \subset S$. Fix such $x$ and $j$. Now, $P^{s - 1}(S) \geq P^{s - 1}(K^{j} \cap K_{t}) = P^{s - 1}(K^{j}_{t})$. The last quantity here is $\infty$, because $t = \pi(x) \in \pi(K^{j}) \setminus E^{j}$. So, $P^{s - 1}(S) = \infty$, and this means that $\calP^{s - 1}(K_{t}) = \infty$.  \end{proof}


\section{Main proofs}\label{mainProofs}

Fix the self-similar set $K$, generated by the rotation and reflection free family of similitudes $\{\psi_{1},\ldots,\psi_{q}\}$. I will abbreviate $\lesssim_{K,\pi}, \gtrsim_{K,\pi}$ and $\sim_{K,\pi}$ to $\lesssim,\gtrsim$ and $\sim$. A slightly stronger version of Theorem \ref{mainWeak} reads as follows.

\begin{thm}\label{main} The conclusion of Theorem \ref{mainWeak} remains valid, if the hypothesis \textup{(B)} is replaced by the weaker assumption
\begin{itemize}
\item[(B')] Write $a = \min \pi(K)$ and $b = \max \pi(K)$. Assume that either $\pi^{-1}\{a\}$ or $\pi^{-1}\{b\}$ meets only one of the sets $\psi_{j}(K)$.\end{itemize}
\end{thm}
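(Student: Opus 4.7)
The packing dimension identity $\Pd K_t = s-1$ follows by combining Falconer's lemma recalled in the introduction (which supplies $\Pd K_t \leq s-1$ a.e.) with the positivity of $\calP^{s-1}(K_t)$. Hence it suffices to prove $\calP^{s-1}(K_t) = \infty$ for a.e.\ $t \in \pi(K)$, and by Lemma~\ref{reductionLemma} this reduces further to the same statement for the packing premeasure $P^{s-1}$.

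The structural input is a functional identity for $f(t) := P^{s-1}(K_t)$. Writing $\tilde\psi_j(u) := \rho_j u + a_j$ for the projected similitudes (so $\pi \circ \psi_j = \tilde\psi_j \circ \pi$) and $I(t) := \{j : t \in \pi(\psi_j(K))\}$, strong separation of $K$ renders the pieces $\{\psi_j(K)_t\}_{j \in I(t)}$ compact, pairwise disjoint, and uniformly separated at the natural scale. A packing of $K_t$ therefore splits additively across $j$, giving
\begin{equation*}
f(t) = \sum_{j \in I(t)} \rho_j^{s-1}\, f(\tilde\psi_j^{-1}(t)), \qquad t \in \pi(K).
\end{equation*}
In particular, $f(t) < \infty$ if and only if $f(\tilde\psi_j^{-1}(t)) < \infty$ for every $j \in I(t)$, so the set $G := \{t : f(t) < \infty\}$ is backward invariant under the inverse branches of the projected IFS.

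Suppose for contradiction that $G$ has positive Lebesgue measure; by (A), equivalently $\nu(G) > 0$ for $\nu := \pi_\sharp(\mathcal{H}^s|_K)$. Condition (B') provides a one-sided neighborhood $U$ of the extremal point $a \in \pi(K)$ on which $I(t) = \{j_0\}$, so there the functional equation collapses to $f(t) = \rho_{j_0}^{s-1} f(\tilde\psi_{j_0}^{-1}(t))$, and simultaneously $\nu|_U = \rho_{j_0}^s\,(\tilde\psi_{j_0\sharp}\nu)|_U$, because no other cylinder meets $U$. A change of variables in these two identities yields
\begin{equation*}
\int_U f\, d\nu = \rho_{j_0}^{2s-1} \int_{\tilde\psi_{j_0}^{-1}(U)} f\, d\nu.
\end{equation*}
Writing $U_n := \tilde\psi_{j_0}^{-n}(U)$ (each strictly containing $U_{n-1}$, with $\bigcup_n U_n$ exhausting $\pi(K)$) and iterating this identity in tandem with the full functional equation on $\pi(K) \setminus U$, the target is to force $\int_U f\, d\nu = 0$, in contradiction with $f > 0$ everywhere on $\pi(K)$.

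The main obstacle is closing that iteration. On $U_1 \setminus U$ one only has the strict inequality $f(t) > \rho_{j_0}^{s-1} f(\tilde\psi_{j_0}^{-1}(t))$, so the clean $U$-identity cannot be re-applied naively at the next level, and bookkeeping the ``extra'' contributions $\sum_{j \neq j_0} \rho_j^{s-1} f(\tilde\psi_j^{-1}(t))$ on $U_n \setminus U_{n-1}$ is delicate. A cleaner alternative is to establish a direct lower bound of the form
\begin{equation*}
P^{s-1}(K_t) \gtrsim \limsup_{n \to \infty}\, \sum_{\omega \in \Sigma_n(t)} \rho_\omega^{s-1}, \qquad \Sigma_n(t) := \{\omega \in \{1,\dots,q\}^n : t \in \pi(\psi_\omega(K))\},
\end{equation*}
via multi-scale packings assembled from strongly separated cylinders meeting $\pi^{-1}\{t\}$; the remaining work is then to show that this limsup diverges for $\nu$-a.e.\ $t$, which is a large-deviations-type statement about the self-similar measure $\nu$ in which (B') breaks the boundary symmetry that would otherwise permit balanced average behavior of the cylinder sums.
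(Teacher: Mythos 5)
Your reduction steps are fine and match the paper: Falconer's lemma plus positivity gives the dimension statement, and Lemma~\ref{reductionLemma} reduces everything to showing $P^{s-1}(K_t)=\infty$ a.e. But from that point on the proposal does not contain a proof. Your first route (the functional equation $f(t)=\sum_{j\in I(t)}\rho_j^{s-1}f(\tilde\psi_j^{-1}(t))$ and the backward-invariant set $G$) is abandoned by your own admission because the iteration does not close. Your fallback route reduces the theorem to the claim that $\limsup_n \sum_{\omega\in\Sigma_n(t)}\rho_\omega^{s-1}=\infty$ for a.e.\ $t$, which you leave as ``the remaining work'' described vaguely as a large-deviations statement. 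That claim \emph{is} the theorem, and it is not at all routine: replacing $\Sigma_n(t)$ by the natural cut-set $\Delta_r$ (so that the cylinders meeting $\pi^{-1}\{t\}$ have comparable diameter $\sim r$ and separation $\gtrsim r$, which is what a single-scale packing actually requires), one computes $\int r^{s-1}\card\{\omega\in\Delta_r: t\in\pi(K_\omega)\}\,dt \sim r^{s-1}\cdot r\cdot\card\Delta_r\sim 1$, since $\sum_{\omega\in\Delta_r}\rho_\omega^s=1$ forces $\card\Delta_r\sim r^{-s}$. So the quantity you want to send to infinity has \emph{bounded average} at every scale; its a.e.\ divergence can only come from fluctuations, and you give no mechanism for producing them. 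Nothing in the proposal explains how (B') would do this beyond the phrase ``breaks the boundary symmetry.''

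The paper's actual argument gets the unbounded lower bound from a different source: not from counting many cylinders on one fiber at one scale, but from making the individual packing balls much \emph{larger} than the scale of the cylinder. Condition (B') is used (Lemma~\ref{rectangles}) to produce, at $\mu$-a.e.\ point and at arbitrarily small scales, concentric upright rectangles $R_1\subset R_2$ of arbitrarily large aspect ratio $C$ with $K\cap R_2\subset R_1$, $\mathcal{H}^1(\pi(K\cap R_2))\gtrsim w(R_2)$ and $\mu(R_2)\sim w(R_2)^s$ with constants independent of $C$; the key point is the nested-tube inclusion $K_{\omega l^{N+k-1}}\subset \pi^{-1}[d,d+\rho_\omega\rho_l^{k-1}\kappa]\cap K\subset K_{\omega l^{k}}$, which traps $K\cap R_2$ near the vertical midline. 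Each such rectangle then contributes a packing interval of length $\sim h(R_2)=Cw(R_2)$ to the fiber, and a Vitali-covering plus Fubini computation gives $\int_E P^{s-1}_\delta(K_t)\,dt\gtrsim C^{s-1}\mu(\pi^{-1}(E_0))$; letting $C\to\infty$ finishes. This factor $C^{s-1}$ is exactly the gain your single-scale cylinder count cannot see. To salvage your approach you would need either to prove the fluctuation statement (for which no tool is offered) or to import something equivalent to Lemma~\ref{rectangles}, which is where all the real work and the use of (A) and (B') resides.
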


To see that condition (B') is weaker than (B), first observe that $\pi(K)$ is generated by the similitudes $\psi_{j}'(t) = \rho_{j} t + \pi(w_{j})$. Then, it is easy the check that if, say, $\pi^{-1}\{\min \pi(K)\}$ meets $\psi_{i}(K)$ and $\psi_{j}(K)$, then $\min \pi(K)$ is a fixed point of both $\psi_{i}'$ and $\psi_{j}'$, and this forces $i = j$ by (B). 

The proof of Theorem \ref{main} occupies the rest of the paper. Write $\mu := \mathcal{H}^{s}|_{K}$. Then $\mu$ is a constant multiple of the the natural self-similar probability measure on $K$. In other words, $\mu$ satisfies
\begin{displaymath} \mu = \sum_{j = 1}^{q} \rho_{j}^{s} \cdot \psi_{j\sharp}\mu. \end{displaymath}

The main technical lemma of the paper, below, states that under the hypotheses (A) and (B'), $\mu$ almost all of the set $K$ can be covered with arbitrarily tall and narrow upright rectangles with the useful property that the part of $K$ inside each rectangle is contained relatively near its midpoint: 
\begin{lemma}\label{rectangles} Let $K \subset \R^{2}$ be a RRFSSS, and fix $C \geq 1$. Assuming \emph{(A)} and \emph{(B')}, the following holds for $\mu$-a.e. $x \in K$. For any $\delta > 0$, there exist concentric axes-parallel rectangles $R_{1} \subset R_{2} \subset \R^{2}$ with the following properties. 
\begin{itemize}
\item[(i)] $x \in R_{1}$, and $d(R_{2}) < \delta$. 
\item[(ii)] $h(R_{1}) \sim w(R_{1}) = w(R_{2}) \sim h(R_{2})/C$.
\item[(iii)] $K \cap R_{2} \subset R_{1}$,
\item[(iv)] $\mathcal{H}^{1}(\pi(K \cap R_{2})) \geq \eta w(R_{2})$ for some constant $\eta = \eta_{K} \in (0,1)$,
\item[(v)] $\mu(R_{2}) \sim w(R_{2})^{s}$.
\end{itemize}
Here $d,h$ and $w$ refer to diameter, height and width, respectively. The constants implicit in "$\sim$" depend only on $K$, and \textbf{not} on $C$ or $\delta$.
\end{lemma}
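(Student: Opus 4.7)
The plan is to construct the rectangles $R_1 \subset R_2$ from carefully chosen cylinders $\psi_\omega(K)$ in the symbolic address of a $\mu$-typical $x$, exploiting an ``isolation strip'' created by (B'). Write $\psi_\omega := \psi_{\omega_1}\circ\cdots\circ\psi_{\omega_n}$ and $\rho_\omega := \rho_{\omega_1}\cdots\rho_{\omega_n}$. Without loss of generality by (B'), $\pi^{-1}\{b\} \cap K \subset \psi_1(K)$ where $b = \max\pi(K)$, and by compactness there exists $\gamma > 0$ with $\pi^{-1}(b - \gamma, b] \cap K \subset \psi_1(K)$; iterating by self-similarity, $\pi^{-1}(b - \gamma \rho_1^{N-1}, b] \cap K \subset \psi_1^N(K)$ for each $N\geq 1$.

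Two generic properties at $x$ will be invoked: (a) by (A) and Lebesgue differentiation, the projected density $f(\pi(x)) := d\pi_\sharp\mu/d\mathcal{H}^1(\pi(x))$ exists and is strictly positive; (b) by Birkhoff's ergodic theorem applied to the symbolic shift with the Bernoulli-type measure projecting to $\mu$, the address $\omega(x) = (\omega_n)$ of $x$ contains the block $1^N$ at infinitely many positions, for every $N \in \N$.

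Given $C \geq 1$ and $\delta > 0$, I would pick $N = N(K,C)$ large enough so that $\rho_1^N \sim 1/C$ (matching the desired aspect ratio) and so that a suitable internal-isolation inequality holds. By (b), pick $n$ large enough that $\omega_{n-N+1}\cdots\omega_n = 1^N$ and $\rho_{\omega_1\cdots\omega_{n-N}} < \delta$; set $\omega' := \omega_1\cdots\omega_{n-N}$, so $x \in \psi_{\omega' 1^N}(K)\subset \psi_{\omega'}(K)$. Take $R_1$ to be the axes-parallel bounding box of $\psi_{\omega' 1^N}(K)$ (width $w \sim \rho_{\omega'}\rho_1^N$, height $\sim w$), and $R_2$ the concentric rectangle with the same width as $R_1$ but with height $\sim \rho_{\omega'} \sim Cw$. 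Items (i), (ii) are immediate from the construction; (iv) follows from the fact that $\pi(\psi_{\omega' 1^N}(K))$ is a rescaled copy of $\pi(K)$ and $\mathcal{H}^1(\pi(K)) > 0$ by (A); (v) follows from (iii) together with the Frostman bound $\mu(B(\cdot,r)) \lesssim r^s$ valid on $K$ by the SSC.

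The crux is (iii), $K \cap R_2 \subset R_1$, and I would split its verification into two contributions. For $K$-points inside $\psi_{\omega'}(K)$, the $N$-fold iterated (B') gap applied within $\psi_{\omega'}(K)$ forces every $K$-point in the narrow $x$-strip underlying $R_2$ to lie inside $\psi_{\omega' 1^N}(K) \subset R_1$, provided the strip width $w$ is dominated by the iterated isolation width $\gamma\rho_{\omega'}\rho_1^{N-1}$; this dictates the lower bound on $N$ from internal isolation. For $K$-points outside $\psi_{\omega'}(K)$, each such point lies in a sibling cylinder $\psi_\tau(K)$ of $\psi_{\omega'}(K)$, separated from $\psi_{\omega'}(K)$ by a distance of order $\rho_{\omega'}$ via the SSC. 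Because $R_2$ has $y$-extent $\sim \rho_{\omega'}$, this distance alone is not quite enough to rule out sibling intrusions into $R_2$; dealing with this is the main technical hurdle, and my plan is to handle it by arranging $\omega'$ itself to end in a further long $1$-block, nesting the construction one level deeper so that $\psi_{\omega'}(K)$ sits deep inside an isolation strip of a still-larger ancestor, and iterating this idea until all the required quantitative inequalities are compatible uniformly in $C$ and $\delta$.
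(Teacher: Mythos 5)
Your overall strategy is the same as the paper's: use (B') to manufacture an isolation strip $\pi^{-1}(b-\gamma\rho_1^{N-1},b]\cap K\subset\psi_1^N(K)$, locate $\mu$-a.e.\ $x$ in cylinders whose address ends in a long $1$-block (Borel--Cantelli/ergodicity), and take $R_1,R_2$ to be a thin vertical window around the corresponding sub-cylinder. Items (i), (ii), (iv) are handled correctly, and (v) also works, though you should note that besides the Frostman upper bound one needs the lower bound $\mu(R_2)\geq\mu(\psi_{\omega'1^N}(K))\sim w(R_2)^s$, which your construction supplies since $R_1$ contains that cylinder.

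The gap is exactly where you say it is: item (iii) is not proved. Your $R_2$ has height $\sim\rho_{\omega'}$ with implicit constant of order $1$, which may exceed the strong-separation distance $c_0\rho_{\omega'}$ between $\psi_{\omega'}(K)$ and the other cylinders of comparable diameter, so sibling intrusions are a real possibility, and your proposed fix is left as a plan rather than an argument. Moreover, the mechanism you invoke is not the right one: having $\psi_{\omega'}(K)$ sit ``deep inside an isolation strip of a larger ancestor'' does not exclude siblings, because the isolation strip is a constraint on the horizontal ($\pi$-) coordinate and only controls points \emph{of the ancestor cylinder}; points of $K$ outside the ancestor, lying above or below, are untouched by it. What actually closes the argument --- and what the paper does --- is much simpler: take $h(R_2)=2c\rho_{\omega'}$ with $c=c(K)>0$ so small that $\diam(R_2)\lesssim c\rho_{\omega'}$ is strictly smaller than the separation $\gtrsim\rho_{\omega'}$ between distinct cylinders indexed by $\Delta_{\rho_{\omega'}}$. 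Then every $z\in K\cap R_2$ must lie in the same $\Delta_{\rho_{\omega'}}$-cylinder as $x$, namely $\psi_{\omega'}(K)$, and the horizontal isolation inside $\psi_{\omega'}(K)$ then forces $z\in\psi_{\omega'1^N}(K)\subset R_1$. This costs only a $K$-dependent constant in (ii) and (v), which the statement of the lemma explicitly allows. Your nesting idea can be salvaged --- it amounts to realising the small constant as $\rho_1^M$ for a fixed $M=M(K)$, with the separation-versus-diameter comparison now made at the ancestor's scale --- but as written it is not carried out, and the assertion that ``iterating'' eventually makes the inequalities compatible uniformly in $C$ and $\delta$ is not justified.
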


\begin{proof} Write
\begin{displaymath} K_{\omega_{1}\cdots\omega_{m}} := \psi_{\omega_{1}} \circ \cdots \circ \psi_{\omega_{m}}(K), \quad (\omega_{1},\ldots,\omega_{m}) \in \{1,\ldots,q\}^{m}. \end{displaymath}
Then $K_{\omega_{1}\cdots\omega_{m}}$ is the subset of $K_{\omega_{1}\cdots\omega_{m - 1}}$, which corresponds to 'using the $\omega_{m}^{th}$ rule inside $K_{\omega_{1}\cdots\omega_{m - 1}}$'. A set of the form $K_{\omega_{1}\cdots\omega_{m}}$ will be called a \emph{generation $m$ set}, and it is one of the $q$ \emph{children} of the set $K_{\omega_{1}\cdots\omega_{m - 1}}$. Grandchildren, grand grandchildren and so forth will be referred to as \emph{descendants}. 

Let $\Sigma^{\ast}$ stand for the set of finite words over the alphabet $\{1,\ldots,q\}$, and, for $r > 0$, write
\begin{displaymath} \Delta_{r} := \{(\omega_{1},\ldots,\omega_{n}) \in \Sigma^{\ast} : \rho_{\omega_{1}\cdots\omega_{n}} \leq r < \rho_{\omega_{1}\cdots\omega_{n - 1}}\}, \end{displaymath}
where $\rho_{\omega_{1}\cdots\omega_{m}} := \rho_{\omega_{1}}\cdots\rho_{\omega_{m}}$ is the contraction ratio of $\psi_{\omega_{1}} \circ \cdots \circ \psi_{\omega_{m}}$. 

Suppose for instance that (B') holds in the form that $\pi^{-1}\{a\}$ meets only one of the first generation sets $K_{j}$, say $K_{l} = \rho_{l}K + w_{l}$, $l \in \{1,\ldots,q\}$ (where $l$ stands for 'left'). Also, assume without loss of generality that $a = \min \pi(K) = 0$. Then, there exists a number $\kappa > 0$ such that $\pi^{-1}[0,\kappa]$ meets no first generation sets besides $K_{l}$, see Figure \ref{fig1}. By self-similarity, $\pi^{-1}[0,\rho_{l}\kappa]$ meets exactly one of the second generation sets, namely $K_{ll}$. In general, $\pi^{-1}[0, \rho_{l}^{k - 1}\kappa]$ meets only one of the generation $k$ sets, namely $K_{l^{k}}$ (where $l^{k}$ is shorthand for $l\cdots l$). On the other hand, $\pi^{-1}[0,\kappa]$ contains $K_{l^{N}}$ for some $N \in \N$, so $\pi^{-1}[0,\rho_{l}^{k - 1}\kappa]$ contains $K_{l^{N + k - 1}}$. The conclusions of this paragraph can be combined by writing
\begin{equation}\label{inclusions} K_{l^{N + k - 1}} \subset \pi^{-1}[0,\rho_{l}^{k - 1}\kappa] \cap K \subset K_{l^{k}}. \end{equation}

\begin{figure}[h!]
\begin{center}
\includegraphics[scale = 0.5]{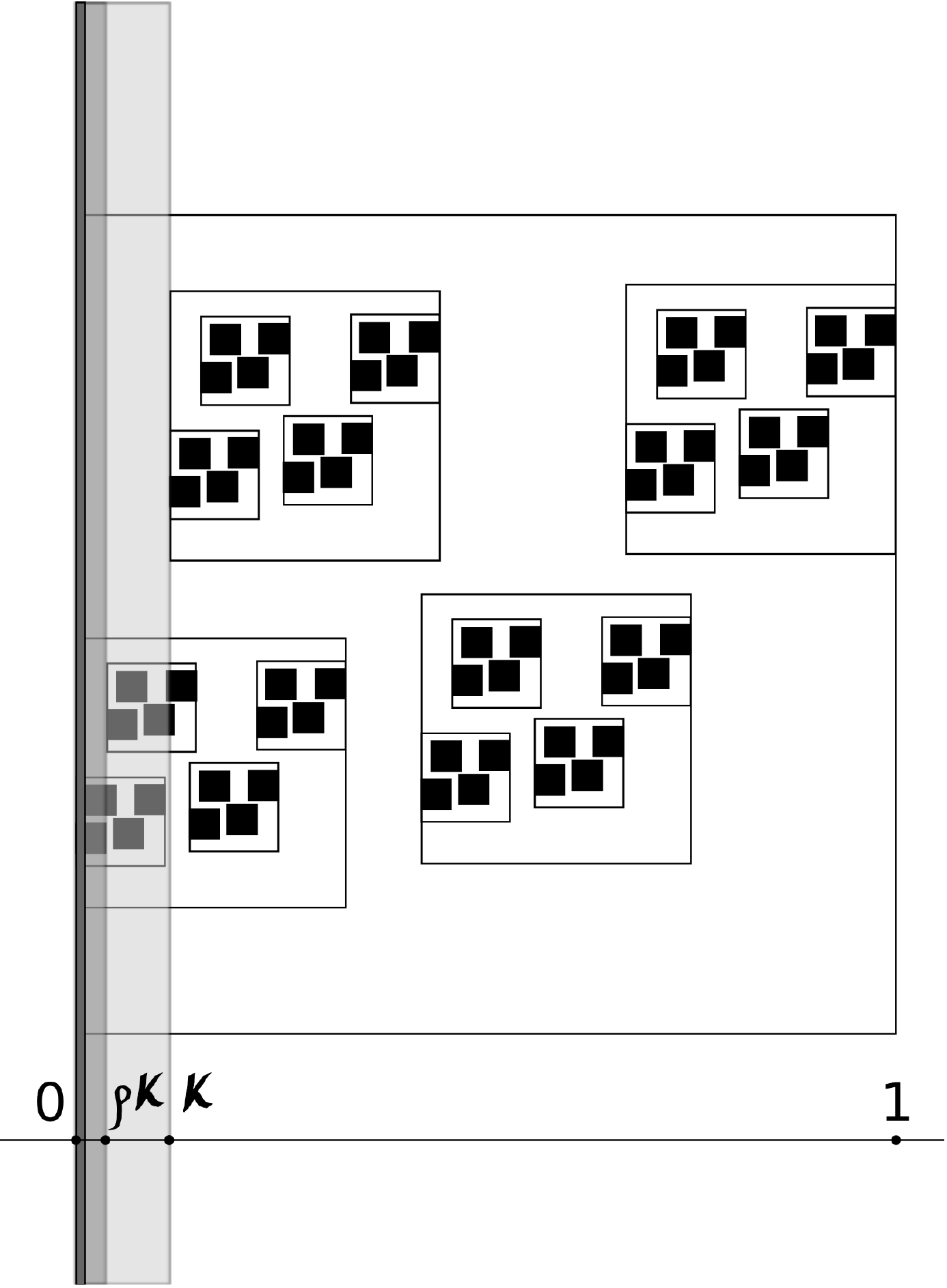}
\caption{The set $K$ and the tubes $\pi^{-1}[0,\kappa]$, $\pi^{-1}[0,\rho_{l} \kappa]$ and $\pi^{-1}[0,\rho_{l}^{2}\kappa]$.}\label{fig1}
\end{center}
\end{figure}

Now suppose that 
\begin{equation}\label{aePoint} x = (t,y) \in K_{\omega_{1}\cdots\omega_{m}l^{N + k - 1}}. \end{equation}
Here $N$ is the same number as above, and depends only on $K$. The parameter $k = k_{C} \in \N$ will be chosen large enough depending only on $C$. For fixed $k,N \in \N$, it follows by elementary probability theory that $\mu$ almost every point $x \in K$ is contained in infinitely many sets of the form \eqref{aePoint}, that is, for arbitrarily long sequences $\omega_{1}\cdots\omega_{m}$. Thus, the proof is completed by showing that the rectangles $R_{1},R_{2}$ containing $x$ and satisfying (i)--(iv) can be found, whenever \eqref{aePoint} holds.

First, observe that
\begin{equation}\label{placeOfX} x \in K_{\omega_{1}\cdots\omega_{m}l^{N + k - 1}} \subset \pi^{-1}[d,d + \rho_{\omega_{1}\cdots\omega_{m}}\rho_{l}^{k - 1}\kappa], \end{equation}
by self-similarity and \eqref{inclusions}, where $d = \min \pi(K_{\omega_{1}\cdots\omega_{m}})$. Also, for the same reasons, 
\begin{equation}\label{form2} \pi^{-1}[d,d + \rho_{\omega_{1}\cdots\omega_{m}}\rho_{l}^{k - 1}\kappa] \cap K_{\omega_{1}\cdots\omega_{m}} \subset K_{\omega_{1}\cdots\omega_{m}l^{k}}. \end{equation}

Now, define
\begin{displaymath} R_{2} = [d,d + \rho_{\omega_{1}\cdots\omega_{m}}\rho_{l}^{k - 1}\kappa] \times [y - c\rho_{\omega_{1}\cdots\omega_{m}}, y + c\rho_{\omega_{1}\cdots\omega_{m}}], \end{displaymath} 
where $c > 0$ depends only on $\rho_{\min} := \min\{\rho_{j} : 1 \leq j \leq q\} > 0$ and will be specified soon. Note that $K_{\omega_{1}\cdots\omega_{m}} \cap R_{2} \subset K_{\omega_{1}\cdots\omega_{m}l^{k}}$ by \eqref{form2}, but we need something better: the next step is to verify that  
\begin{equation}\label{inclusions2} K \cap R_{2} \subset K_{\omega_{1}\cdots\omega_{m}l^{k}}. \end{equation}
Assume that this is not the case, and find a point $z \in (K \cap R_{2}) \setminus K_{\omega_{1}\cdots\omega_{m}l^{k}}$. Let $\mathbf{i} \in \Sigma^{\ast}$ be the unique finite word in $\Delta_{\rho_{\omega_{1}\cdots\omega_{m}}}$ such that $z \in K_{\mathbf{i}}$ (observe that the sets $K_{\mathbf{i}}$, $\mathbf{i} \in \Delta_{r}$, form a partition of $K$ for any fixed $r > 0$). Then $\mathbf{i} \neq (\omega_{1}\cdots\omega_{m})$, because otherwise $z \in R_{2} \cap K_{\omega_{1}\cdots\omega_{m}} \subset K_{\omega_{1}\cdots\omega_{m}l^{k}}$ by \eqref{form2}. Now, note the general fact that if $\mathbf{i},\mathbf{j} \in \Delta_{r}$ are two distinct finite words, then $\dist(K_{\mathbf{i}},K_{\mathbf{j}}) \gtrsim r$, where the implicit constants only depend on $\rho_{\min}$ and the constants arising from the strong separation condition: with $\mathbf{i}$ as above, $\mathbf{j} = (\omega_{1}\cdots\omega_{m})$ and $r = \rho_{\omega_{1}\cdots\omega_{m}}$, this gives
\begin{displaymath} \dist(K_{\mathbf{i}},K_{\omega_{1}\cdots\omega_{m}}) \gtrsim \rho_{\omega_{1}\cdots\omega_{m}}. \end{displaymath}
Recalling that $z \in K_{\mathbf{i}} \cap R_{2}$, observing that $\diam(R_{2}) \leq 10c\rho_{\omega_{1}\cdots\omega_{m}}$, and choosing $c > 0$ small enough, it follows that $R_{2} \cap K_{\omega_{1}\cdots\omega_{m}} = \emptyset$. But this is a contradiction, since obviously $x \in R_{2} \cap K_{\omega_{1}\cdots\omega_{m}}$. Thus, \eqref{inclusions2} is proved.

Now, we claim that $R_{2}$ is the rectangle we are after, and that we can construct $R_{1} \subset R_{2}$ appropriately. Clearly (i) is satisfied, if $m$ is large enough (depending on $\delta$). Also, the ratio between $h(R_{2})$ and $w(R_{2})$ can be made to exceed $C$ by increasing $k$ (so $k$ depends only on $C$, as we promised). The rectangle $R_{1}$ is defined as the rectangle concentric with $R_{2}$, with $w(R_{1}) = w(R_{2})$ and $h(R_{1}) = A\rho_{\omega_{1}\cdots\omega_{m}}\rho_{l}^{k}$. The absolute constant $A \geq 1$ will be specified momentarily. Then (ii) is satisfied. 

To prove (iii)--(v), we choose $A$ so large that
\begin{equation}\label{inclusions3} K_{\omega_{1}\cdots\omega_{m}l^{k}} \cap \pi^{-1}[d,d + \rho_{\omega_{1}\cdots\omega_{m}}\rho_{l}^{k - 1}\kappa] \subset R_{1}. \end{equation}
Such a choice is possible, because $x = (t,y) \in K_{\omega_{1}\cdots\omega_{m}l^{k}}$, the second coordinate of the midpoint of $R_{1}$ is $y$, and the height of the set $K_{\omega_{1}\cdots\omega_{m}l^{k}}$ is 
\begin{displaymath} \lesssim \rho_{\omega_{1}\cdots\omega_{m}}\rho_{l}^{k} = h(R_{1})/A. \end{displaymath}
Then (iii) is an immediate consequence of \eqref{inclusions2} and \eqref{inclusions3}.

The claim (iv) follows from the assumption (A), which implies that $\mathcal{H}^{1}(\pi(K)) =: \tau_{K} > 0$. Combining \eqref{inclusions3} with \eqref{placeOfX}, one finds that
\begin{equation}\label{inclusions4} K_{\omega_{1}\cdots\omega_{m}l^{N + k - 1}} \subset K_{\omega_{1}\cdots\omega_{m}l^{k}} \cap \pi^{-1}[d,d + \rho_{\omega_{1}\cdots\omega_{m}}\rho_{l}^{k - 1}\kappa] \subset K \cap R_{2}, \end{equation}
which gives
\begin{displaymath} \mathcal{H}^{1}(\pi(K \cap R_{2})) \geq \mathcal{H}^{1}(\pi(K_{\omega_{1}\cdots\omega_{m}l^{N + k - 1}})) = \rho_{\omega_{1}\cdots\omega_{m}}\rho_{l}^{N + k - 1}\tau_{K} \sim_{N} \rho_{\omega_{1}\cdots\omega_{m}}\rho_{l}^{k - 1}\kappa = w(R_{2}). \end{displaymath} 
This is precisely (iv), since the constant $N$ depends only on $K$.

Finally, \eqref{inclusions4} and \eqref{inclusions2} (in this order) combined yield (v):
\begin{displaymath} w(R_{2})^{s} \sim_{N} \rho_{\omega_{1}\cdots\omega_{m}}^{s}\rho_{l}^{s(N + k - 1)} \leq \mu(R_{2}) \leq \mu(K_{\omega_{1}\cdots\omega_{m}l^{k}}) = \rho_{\omega_{1}\cdots\omega_{m}}^{s}\rho_{l}^{ks} \sim w(R_{2})^{s}. \end{displaymath}
The proof of the lemma is complete.
\end{proof}

\begin{proof}[Proof of Theorem \ref{main}] The plan is to fix any $\mathcal{H}^{1}$-positive subset $E \subset \pi(K) \subset [0,1]$, and prove that
\begin{equation}\label{integral} \int_{E} P_{\delta}^{s - 1}(K_{t}) \, dt = \infty \end{equation}
for any $\delta > 0$. This implies that $P^{s - 1}(K_{t}) = \infty$ for almost every $t \in [0,1]$: otherwise we could use Egoroff's theorem to find $\delta > 0$ and a $\mathcal{H}^{1}$-positive subset $E$ such that $P^{s - 1}_{\delta}(K_{t}) \leq A$ for $t \in E$, violating \eqref{integral}.

Fix the $\mathcal{H}^{1}$-positive subset $E \subset [0,1]$. Pick a small $\epsilon > 0$, and let $E_{0} \subset E$ be a $\mathcal{H}^{1}$-positive subset with the following property: if
\begin{itemize}
\item $I \subset \R$ is an interval of length $\ell(I) < \epsilon$, which intersects $E_{0}$, and
\item $F_{I} \subset I$ is any compact subset with $\mathcal{H}^{1}(F_{I} \cap I) \geq \eta\ell(I)$, where $\eta > 0$ is the constant from Lemma \ref{rectangles}(iii),
\end{itemize}
then 
\begin{displaymath} \mathcal{H}^{1}(E \cap F_{I}) \geq \eta\ell(I)/2. \end{displaymath}
By the Lebesgue differentiation theorem and Egoroff's theorem, such a set can be found when $\epsilon > 0$ is small enough.  

Observe that it suffices to prove \eqref{integral} for small $\delta$ (instead of all $\delta$), because $\delta \mapsto P_{\delta}^{s - 1}(K_{t})$ is a non-decreasing function. In particular, one may restrict considerations to $\delta \leq \epsilon$. Let $K_{0} \subset K$ be the set of points described in Lemma \ref{rectangles}. Then the rectangles $R_{2}$ in the said lemma (with a large parameter $C \geq 1$) form a Vitali cover for $K_{0}$, so, by the Vitali covering theorem (see \cite[Theorem 1.10]{Fa}), there exists a disjoint collection of rectangles $\calR_{2}$ such that $d(R_{2}) \leq \delta$ for all $R_{2} \in \calR_{2}$, and either
\begin{displaymath} \sum_{R_{2} \in \calR_{2}} d(R_{2})^{s} = \infty \quad \text{or} \quad \mu\left(K \setminus \bigcup_{R_{2} \in \calR_{2}} R_{2} \right) = 0. \end{displaymath}
Since $d(R_{2})^{s} \lesssim_{C} \mu(R_{2})$, the first condition is impossible by disjointness. So the second condition holds.

For $F \subset \R$, write $\calR_{F} := \{R_{2} \in \calR_{2} : K \cap R_{2} \cap \pi^{-1}(F) \neq \emptyset\}$. Then
\begin{equation}\label{form1} \pi_{\sharp}\mu(F) = \mu(\pi^{-1}(F)) \leq \sum_{R_{2} \in \calR_{F}} \mu(R_{2}). \end{equation}
Let $t \in E$. A packing\footnote{A \emph{packing} of a set $A$ is a collection of disjoint discs centred at points in $A$. These objects appear in the definition of the packing premeasure.} of the set $K_{t} = K \cap \pi^{-1}\{t\}$ can be found as follows. For each rectangle $R_{2} \in \calR_{\{t\}}$, one finds, by the definition of $\calR_{\{t\}}$, a point $x = (t,y) \in K_{t} \cap R_{2}$ such that $\pi(x) = t$, see Figure \ref{fig2}. But, since $K_{t} \cap R_{2} \subset R_{1}$ by Lemma \ref{rectangles}(iii), one actually has $(t,y) \in K_{t} \cap R_{1}$. By Lemma \ref{rectangles}(ii), $R_{1}$ is rectangle concentric with $R_{2}$, with height $h(R_{1}) \sim w(R_{2}) = h(R_{2})/C$. 
\begin{figure}[h!]
\begin{center}
\includegraphics[scale = 0.55]{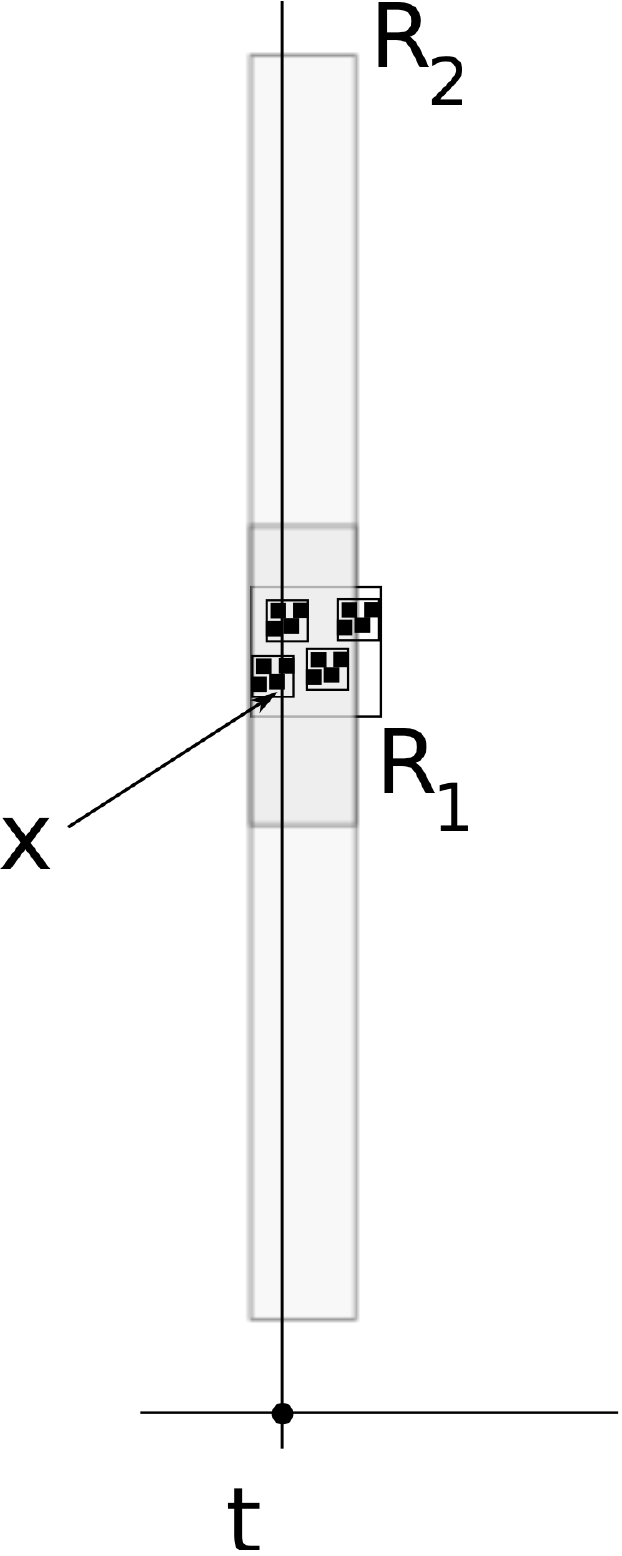}
\caption{Choosing the point $x$.}\label{fig2}
\end{center}
\end{figure}
For $C \geq 1$ large enough, one has 
\begin{displaymath} I_{R_{2}} := \{t\} \times [y - h(R_{2})/3, y + h(R_{2})/3] \subset R_{2}. \end{displaymath}
The intervals $I_{R_{2}}$ are disjoint, because the rectangles in $\calR_{\{t\}}$ are, so 
\begin{displaymath} P_{\delta}^{s - 1}(K_{t}) \geq \sum_{R_{2} \in \calR_{\{t\}}} d(I_{R_{2}})^{s - 1} \gtrsim \sum_{R_{2} \in \calR_{\{t\}}} h(R_{2})^{s - 1} = C^{s - 1}\sum_{R_{2} \in \calR_{\{t\}}} w(R_{2})^{s - 1}. \end{displaymath}
This gives
\begin{align*} \int_{E} P_{\delta}^{s - 1}(K_{t}) \, dt & \gtrsim C^{s - 1} \int_{E} \sum_{R_{2} \in \calR_{\{t\}}} w(R_{2})^{s - 1} \, dt\\
& = C^{s - 1} \sum_{R_{2} \in \calR_{2}} \int_{E \cap \pi(K \cap R_{2})} w(R_{2})^{s - 1} \, dt\\
& \geq C^{s - 1} \sum_{R_{2} \in \calR_{E_{0}}} \int_{E \cap \pi(K \cap R_{2})} w(R_{2})^{s - 1} \, dt\\
& \stackrel{(\ast)}{\gtrsim} C^{s - 1}\eta \sum_{R_{2} \in \calR_{E_{0}}} \ell(\pi(R_{2})) \cdot w(R_{2})^{s - 1}\\
& = C^{s - 1}\eta \sum_{R_{2} \in \calR_{E_{0}}} w(R_{2})^{s}\\
& \sim C^{s - 1}\eta \sum_{R_{2} \in \calR_{E_{0}}} \mu(R_{2}) \geq C^{s - 1}\eta \cdot \mu(\pi^{-1}(E_{0})) \end{align*} 
In the last inequality, \eqref{form1} was used. The $\sim$ relation on the last line is Lemma \ref{rectangles}(v). Finally, ($\ast$) follows from the definition of $E_{0}$: if $R_{2} \in \calR_{E_{0}}$, then $\pi(R_{2})$ is an interval of length $\leq \delta \leq \epsilon$ intersecting $E_{0}$, and $\pi(K \cap R_{2}) \subset \pi(R_{2})$ is a compact subset of length $\geq \eta\ell(\pi(R_{2}))$ by Lemma \ref{rectangles}(iv). Hence $\mathcal{H}^{1}(E \cap \pi(K \cap R_{2})) \geq \eta\ell(\pi(R_{2}))/2$ by the definition of $E_{0}$.

The value of the constant $C$ is independent of $\eta$ or $\pi_{\sharp}\mu(E_{0})$, so one may let $C \to \infty$. Moreover, the projected measure $\pi_{\sharp}\mu$ is equivalent to $\mathcal{H}^{1}|_{\pi(K)}$ (and not just absolutely continuous) according to a result of Peres, Schlag and Solomyak \cite[Proposition 3.1]{PSS}. This means that $\mu(\pi^{-1}(E_{0})) > 0$, so \eqref{integral} is true, and the proof is complete, by Lemma \ref{reductionLemma}. \end{proof}

\begin{proof}[Proof of Corollary \ref{mainCor}] There are only countably many one-dimensional subspaces $L$ such that the self-similar set $\pi_{L}(K)$ has exact overlaps. Since $\pi_{L\sharp}(\mathcal{H}^{s}|_{K}) \ll \mathcal{H}^{1}$ for almost all $L$ by Marstrand's projection theorem, the corollary follows directly from Theorem \ref{mainWeak}. \end{proof}

\section{An open problem}

In Theorem \ref{mainWeak}, one assumes that the projection $\pi_{\sharp}(\mathcal{H}^{s}|_{K})$ is absolutely continuous. Is this necessary? In other words, do there exist self-similar sets $K \subset \R^{2}$ such that $\dim K = s > 1$, the projection $\pi_{\sharp}(\mathcal{H}^{s}|_{K})$ is singular, and still 
\begin{displaymath} \mathcal{P}^{s - 1}(K \cap \pi^{-1}\{t\}) = \infty \end{displaymath}
for Lebesgue almost every $t \in \pi(K)$? In this case, it follows from Kempton's results \cite{Ke} that 
\begin{displaymath} \mathcal{H}^{s - 1}(K \cap \pi^{-1}\{t\}) = 0 \end{displaymath}
for $\mathcal{H}^{1}$ almost all $t \in \R$, but the packing measure is much harder to bound from above.



\end{document}